\theoremstyle{plain}
\newtheorem{theorem}{Theorem}
\newtheorem{lemma}{Lemma}
\newtheorem{proposition}{Proposition}
\theoremstyle{remark}
\theoremstyle{definition}
\newtheorem{definition}{Definition}[section]
\newcommand{\R}{\mathbb R}
\newcommand{\E}{\mathbb R^n}
\newcommand{\f}{f:X\to\R}
\newcommand{\be}{\begin{equation}}
\newcommand{\ee}{\end{equation}}
\newcommand{\ld}[1]{#1^{\prime}_{-}}
\newcommand{\ph}{\varphi}
\title{ Characterizations of Pseudoconvex Functions}
\author{Vsevolod I. Ivanov\thanks{Email: vsevolodivanov@yahoo.com
\vspace{6pt} }
\\\vspace{6pt}{\em{\small Department of Mathematics, Technical University of Varna, 9010 Varna, Bulgaria} }
}
\begin{document}
\maketitle

\begin{abstract}
A differentiable function is pseudoconvex if and only if its restrictions over straight lines are pseudoconvex. A differentiable function depending on one variable, defined on some closed interval $[a,b]$ is pseudoconvex if and only if there exist some numbers $\alpha$ and $\beta$ such that $a\le\alpha\le\beta\le b$ and the function is strictly monotone decreasing on $[a,\alpha]$, it is constant on $[\alpha,\beta]$, the function is strictly monotone increasing on $[\beta,b]$ and there is no stationary points outside $[\alpha,\beta]$. This property is very simple. In this paper, we show that a similar result holds for lower semicontinuous functions, which are pseudoconvex with respect to the lower Dini derivative. We prove that a function, defined on some interval, is pseudoconvex if and only if its domain can be split into three parts such that the function is strictly monotone decreasing in the first part, constant in the second one, strictly monotone increasing in the third part, and every stationary point is a global minimizer. Each one or two of these parts may be empty or degenerate into a single point. Some applications are derived.

{\bf Keywords:} pseudoconvex function; quasiconvex function 

{\bf AMS subject classifications:}  26B25; 90C26

\end{abstract}

\section{Introduction}
Pseudoconvex functions are among the most useful generalized convex ones. They were introduced independently by Tuy \cite{tuy64} and Mangasarian \cite{man65}. Their importance for optimization were firstly expressed in the book \cite{man69}.

The behavior of the functions involved is very important for studying optimization problems. Therefore, it is useful  to have some characterizations of the graphs of the functions. A Fr\'echet differentiable function, defined on the Euclidean space $\R^n$, is pseudoconvex if and only if its restrictions over the straight lines are pseudoconvex. This leads us to derive characterizations of the graph of pseudoconvex functions, defined on $\R$. It is easy to see that a differentiable function  is pseudoconvex on some closed interval $[a,b]$ if and only if there exist some numbers $\alpha$ and $\beta$ such that $a\le\alpha\le\beta\le b$ and the function attains its global minimum over $[a,b]$ on the interval $[\alpha,\beta]$, the function is strictly decreasing over $[a,\alpha]$ and strictly increasing over $[\beta,b]$ and there is no stationary points outside $[\alpha,\beta]$. The proof of this complete characterization remains simple even if the function is locally Lipschitz pseudoconvex with respect to the Clarke generalized directional derivative. On the other hand, its proof is not so simple if the function is lower semicontinuous.

In this  paper, we generalize the characterization to lower semicontinuous functions, which are pseudoconvex with respect to the lower Dini directional derivative. Some applications of the characterization are provided.

\section{Complete characterization of pseudoconvex functions }
\label{s2}
\setcounter{theorem}{0}
\setcounter{definition}{0}
\setcounter{lemma}{0}
\setcounter{remark}{0}
Let $X\subseteq\R^n$ be an open set. Then
the lower Dini directional derivative of the function $f:X\to\R$ at the point $x$ in direction $u$ is defined as follows: 

\begin{equation}\label{0.0}
\ld f(x,u)=\liminf_{t\to +0}\, t^{-1}[f(x+tu)-f(x)]\,,
\end{equation}

\begin{definition}
A function $f$ defined on some convex set $X\subseteq\R^n$ is called (strictly) pseudoconvex with respect to the lower Dini directional derivative iff the following implication holds:
\[\begin{array}{rl}
x\in X,\; y\in X,\; f(y)<f(x)\quad & \Rightarrow\quad \ld f(x,y-x)<0 \\
(x\in X,\; y\in X,\; y\ne x,\; f(y)\le f(x)\quad & \Rightarrow\quad \ld f(x,y-x)<0).
\end{array}\]
\end{definition}

\begin{proposition}
Let $X\subseteq\R^n$ be a nonempty convex set and $\f$ be a given function. Then $f$ is (strictly) pseudoconvex with respect to the lower Dini derivative if and only if for all $x$, $y\in X$ the restriction of $f$ over straight lines  $\ph:X(x,y)\to\R$, where 
\[
X(x,y)=\{t\in\R\mid x+t(y-x)\in X\},
\]
defined by the equality $\ph(t)=f(x+t(y-x))$, is (strictly) pseudoconvex.
\end{proposition}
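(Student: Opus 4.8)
The plan is to reduce the proposition to the elementary chain rule for the lower Dini derivative under an affine substitution, after which both implications come out by merely testing the defining implication of pseudoconvexity on a single pair of points. First I would record that, by convexity of $X$, the set $X(x,y)$ is an interval, so the one-variable notion ``$\ph$ is pseudoconvex'' is meaningful. The crucial observation is the identity
\[
\ld\ph(t_0,s)=\ld f\big(x+t_0(y-x),\,s(y-x)\big)\qquad\text{for all }t_0\in X(x,y),\ s\in\R,
\]
which follows straight from the definition \eqref{0.0}: writing $z_0=x+t_0(y-x)\in X$, one has $\ph(t_0+\tau s)-\ph(t_0)=f(z_0+\tau\,s(y-x))-f(z_0)$, so dividing by $\tau$ and taking $\liminf_{\tau\to+0}$ gives exactly $\ld f(z_0,s(y-x))$. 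This needs no positive homogeneity; it holds verbatim for negative and zero $s$, which is the one place where a little care pays off.

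For the direction ``$f$ pseudoconvex $\Rightarrow$ every restriction pseudoconvex'', fix $x\ne y$ in $X$ and take $t_1,t_2\in X(x,y)$ with $\ph(t_2)<\ph(t_1)$. Put $z_i=x+t_i(y-x)\in X$; then $f(z_2)<f(z_1)$ and $z_2-z_1=(t_2-t_1)(y-x)$. Pseudoconvexity of $f$ yields $\ld f(z_1,z_2-z_1)<0$, which by the identity (with $t_0=t_1$, $s=t_2-t_1$) is precisely $\ld\ph(t_1,t_2-t_1)<0$; hence $\ph$ is pseudoconvex. In the strict case, $t_1\ne t_2$ together with $x\ne y$ forces $z_1\ne z_2$, so from $f(z_2)\le f(z_1)$ strict pseudoconvexity of $f$ gives the same conclusion. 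Conversely, assume every restriction is pseudoconvex and take $x,y\in X$ with $f(y)<f(x)$ (so automatically $x\ne y$). For the restriction $\ph$ over the line through $x$ and $y$ we have $[0,1]\subseteq X(x,y)$ since $X$ is convex, and $\ph(1)=f(y)<f(x)=\ph(0)$; pseudoconvexity of $\ph$ then gives $\ld\ph(0,1)<0$, i.e.\ $\ld f(x,y-x)<0$ by the identity with $t_0=0$, $s=1$. The strict case is identical, starting from $y\ne x$ and $f(y)\le f(x)$ and using $0\ne 1$.

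I do not expect a genuine obstacle here: the whole content sits in the affine-substitution identity above, and the points that require attention are merely the consistent handling of the strict versus non-strict versions, the bookkeeping $z_2-z_1=(t_2-t_1)(y-x)$ (and the fact that, because the identity is valid for every $s\in\R$, there is no need to case-split on the sign of $t_2-t_1$), and the harmless observation that the degenerate situation $x=y$ is excluded in each use by the hypotheses $f(y)<f(x)$, respectively $y\ne x$.
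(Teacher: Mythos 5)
Your proof is correct and follows essentially the same route as the paper, whose one-line proof ("the lower Dini derivative is radial") is exactly the affine-substitution identity $\ld\ph(t_0,s)=\ld f(x+t_0(y-x),s(y-x))$ that you make explicit and then test on the relevant pairs of points. Your remark about excluding the degenerate case $x=y$ (needed so that the constant restriction does not spoil the strict version) is a sensible clarification that the paper leaves implicit.
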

\begin{proof}
The proof is easy and it follows from the fact that the lower Dini derivative is radial.
\end{proof}

It is said that a function $\ph$, depending on one variable, is increasing (decreasing) on the interval $I$ iff $\ph(t_2)\le\ph(t_1)$   
($\ph(t_2)\ge\ph(t_1)$)  for all $t_1\in I$, $t_2\in I$ such that $t_2<t_1$. It is said that a function $\ph$ is strictly increasing (decreasing) on the interval $I$ iff $\ph(t_2)<\ph(t_1)$ ($\ph(t_2)<\ph(t_1)$) for all $t_1\in I$, $t_2\in I$ such that $t_2<t_1$.

\begin{lemma}\label{lema1}
Let $\ph:I\to\R$, where $I\subset\R$ is an interval, be lower semicontinuous pseudoconvex function. Suppose that $a$, $b\in I$, be given numbers such that $a<b$, $\ph(a)<\ph(b)$. Then there exists a number $s<b$ with the property that $\ph$ is strictly monotone increasing over $I\cap [s,+\infty)$. In particular, $\ph$ is strictly monotone increasing over $I\cap [b,+\infty)$.
\end{lemma}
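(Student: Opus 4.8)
The plan is to isolate the one place where lower semicontinuity is genuinely needed into a single \emph{key claim}, and then read off the statement by an elementary argument. By Proposition~1 I may treat $\ph$ as a one–variable pseudoconvex function, and since the lower Dini derivative is radial (positively homogeneous in the direction) the only relevant quantities at a point $x\in I$ are $\ph'_{-}(x,1)$ and $\ph'_{-}(x,-1)$. From Definition~1 I record the four implications I will use: (a) if some $y>x$ in $I$ has $\ph(y)<\ph(x)$ then $\ph'_{-}(x,1)<0$; (b) if some $y<x$ in $I$ has $\ph(y)<\ph(x)$ then $\ph'_{-}(x,-1)<0$; and their contrapositives (a$'$) $\ph'_{-}(x,1)\ge0\Rightarrow\ph(y)\ge\ph(x)$ for all $y>x$ in $I$, (b$'$) $\ph'_{-}(x,-1)\ge0\Rightarrow\ph(y)\ge\ph(x)$ for all $y<x$ in $I$.

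\emph{Key claim (the main obstacle): no interior point $q$ of $I$ satisfies both $\ph'_{-}(q,1)<0$ and $\ph'_{-}(q,-1)<0$.} To prove it I would assume such a $q$ exists. Using (a)--(b) pick $\ell_{0}<q<r_{0}$ in $I$ with $\ph(\ell_{0}),\ph(r_{0})<\ph(q)$, put $\mu=\max\{\ph(\ell_{0}),\ph(r_{0})\}$, and --- here is where lower semicontinuity enters, so that $\{\ph\le\mu\}$ is \emph{closed} --- set $\ell^{*}=\sup\{t\in[\ell_{0},q]:\ph(t)\le\mu\}$ and $r^{*}=\inf\{t\in[q,r_{0}]:\ph(t)\le\mu\}$. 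Then $\ell^{*}<q<r^{*}$, $\ph(\ell^{*}),\ph(r^{*})\le\mu$, and $\ph>\mu$ on the whole open interval $(\ell^{*},r^{*})$. Applying pseudoconvexity at any $w\in(\ell^{*},r^{*})$ toward the lower-valued endpoints $\ell^{*}$ and $r^{*}$ gives $\ph'_{-}(w,-1)<0$ and $\ph'_{-}(w,1)<0$; thus \emph{every} point of $(\ell^{*},r^{*})$ is "bad'' too. Because $\ph$ is lower semicontinuous it attains its minimum on any compact subinterval $[c_{1},c_{2}]\subset(\ell^{*},r^{*})$, and the minimizer cannot be interior to $[c_{1},c_{2}]$ (an interior minimizer would force $\ph'_{-}(\cdot,1)\ge0$); hence that minimum is $\min\{\ph(c_{1}),\ph(c_{2})\}$, i.e.\ $\ph$ is quasiconcave on $(\ell^{*},r^{*})$. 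An elementary property of quasiconcave functions is that such a $\ph$ is either nonincreasing on all of $(\ell^{*},r^{*})$ or nondecreasing on some nondegenerate initial subinterval $(\ell^{*},t_{0}]$; in the first case $\ph'_{-}(t,-1)\ge0$ throughout $(\ell^{*},r^{*})$, in the second $\ph'_{-}(t,1)\ge0$ throughout $(\ell^{*},t_{0})$ --- either way contradicting "badness''. This proves the key claim; in passing it also yields that $\ph$ is quasiconvex.

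With the key claim available the rest is bookkeeping. I would set $s=\sup\{t\in[a,b]:\ph(t)\le\ph(a)\}$, which is attained since $\{\ph\le\ph(a)\}$ is closed; then $s\in[a,b)$ (as $\ph(b)>\ph(a)$), $\ph(s)\le\ph(a)$, and $\ph(t)>\ph(a)\ge\ph(s)$ for every $t\in(s,b]$. First I show $\ph(t)>\ph(s)$ for all $t\in I$ with $t>s$: for $t\in(s,b]$ this is already noted, and if $t>b$ had $\ph(t)\le\ph(s)\le\ph(a)$, then $a<b<t$ with $\ph(b)>\max\{\ph(a),\ph(t)\}$, so by (a)--(b) both $\ph'_{-}(b,1)<0$ and $\ph'_{-}(b,-1)<0$ at the interior point $b$, contradicting the key claim. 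Finally I prove $\ph$ is strictly increasing on $I\cap[s,+\infty)$: suppose $s\le u<v$ in $I$ with $\ph(u)\ge\ph(v)$. If $u=s$ this contradicts the previous sentence, so $u>s$; then $u$ is interior, $\ph(s)<\ph(u)$ gives $\ph'_{-}(u,-1)<0$, so by the key claim $\ph'_{-}(u,1)\ge0$, so by (a$'$) $\ph(v)\ge\ph(u)$, whence $\ph(u)=\ph(v)=:c_{0}$. For any $w\in(u,v)$, if $\ph(w)>c_{0}$ then $w$ is a bad interior point, and if $\ph(w)<c_{0}$ then (a) at $u$ gives $\ph'_{-}(u,1)<0$, contradicting $\ph'_{-}(u,1)\ge0$; either way impossible, so $\ph\equiv c_{0}$ on $[u,v]$. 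But then $\ph'_{-}(v,-1)=0$, while $\ph'_{-}(u,-1)<0$ provides some $u_{1}\in(s,u)$ with $\ph(u_{1})<c_{0}=\ph(v)$, and pseudoconvexity applied at $v$ toward $u_{1}$ (radiality!) demands $\ph'_{-}(v,-1)<0$ --- a contradiction. Hence $\ph(u)<\ph(v)$; the "in particular'' is immediate since $[b,+\infty)\subseteq[s,+\infty)$.

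The genuine difficulty is the key claim, and precisely the step flagged above: the usual continuous-function monotonicity arguments are unavailable, so one manufactures a \emph{closed} "bump'' from a sublevel set, propagates the pathology to an entire interval, and destroys it by quasiconcavity. Everything after that is routine manipulation of the four implications drawn from Definition~1 together with radiality.
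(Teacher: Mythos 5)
Your proposal is correct, but it follows a genuinely different route from the paper's. The paper works locally around $b$: lower semicontinuity gives a neighbourhood $(s_1,s_2)$ of $b$ on which $\ph>\ph(a)$; strict monotonicity there is proved by contradiction via the auxiliary function $\psi(t)=(t_2-t_1)\ph(t)+(t-t_2)\ph(t_1)+(t_1-t)\ph(t_2)$ and Weierstrass; and the interval is then pushed rightward by a maximality/continuation argument with several case distinctions (whether $s_2$ is interior, whether $\ph(s_2)\le\ph(a)$, whether $b$ is an endpoint), plus a separate step to include the right endpoint. You instead isolate a single structural fact --- no interior point can have both one-sided lower Dini derivatives negative --- proved by manufacturing a closed ``bump'' $(\ell^*,r^*)$ from the sublevel set $\{\ph\le\mu\}$ (this is where lower semicontinuity enters), propagating badness to the whole bump by pseudoconvexity, and killing it with a quasiconcavity argument based on Weierstrass; the lemma then follows by elementary bookkeeping with the explicit choice $s=\sup\{t\in[a,b]:\ph(t)\le\ph(a)\}$, with no continuation argument at all. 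I checked the delicate points: the constancy-interval contradiction via $\ph'_{-}(v,-1)=0$ versus pseudoconvexity at $v$ toward $u_1$ works, and your unproved ``elementary property of quasiconcave functions'' is indeed true and follows in two applications of the definition (if $\ph(x)<\ph(y)$ for some $x<y$ in the bump, quasiconcavity forces $\ph$ to be nondecreasing on $(\ell^*,x]$); it would be better to include that short verification, and the points $\ell_0,r_0$ below level $\ph(q)$ come from the definition of the liminf rather than from your implications (a)--(b), but these are presentational, not gaps. Your approach buys a cleaner, more modular proof whose key claim is essentially the absence of strict-local-maximum-type points (close to quasiconvexity, which the paper only obtains later in Theorem~3), at the price of invoking an auxiliary quasiconcavity property; the paper's argument is more hands-on and self-contained but requires the maximal-interval analysis and endpoint cases that your version avoids.
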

\begin{proof}
Consider the case when $b$ is an interior point of the interval $I$. Then we obtain from the lower semicontinuity of $\ph$ and by the inequality  $\ph(a)<\ph(b)$ that there exist numbers $s_1$, $s_2\in I$ such that $s_1<b<s_2$ and
\[
\ph(a)<\ph(t)\quad\forall t\in (s_1,s_2).
\]

\smallskip
 1) {\em We prove that $\ph$ is strictly monotone increasing over the open interval $(s_1,s_2)$.}
Indeed, suppose  the contrary that there exist such numbers $t_1$, $t_2\in (s_1,s_2)$ that $t_1<t_2$ and $\ph(t_1)\ge\ph(t_2)$. Consider the function
\[
\psi(t)=(t_2-t_1)\ph(t)+(t-t_2)\ph(t_1)+(t_1-t)\ph(t_2),\quad t_1\le t\le t_2.
\]
It is lower semicontinuous and $\psi(t_1)=\psi(t_2)=0$. By Weierstrass extreme value theorem the function $\psi$ attains its minimal value on the closed interval $[t_1,t_2]$ at some point $\xi$, $\xi>t_1$. According to the minimality of $\xi$, we have $\psi[\xi+t(t_1-t_2)]-\psi(\xi)\ge 0$ for all sufficiently small positive numbers $t$. 
Therefore $\ld\psi(\xi,t_1-t_2)\ge 0$. We obtain from here that
\[
\ld \ph(\xi,t_1-t_2)\ge\ph(t_1)-\ph(t_2)\ge 0.
\]
It follows from the inequality $\ph(a)<\ph(\xi)$ and pseudoconvexity of $\ph$ that $\ld\ph(\xi,a-\xi)<0$, which is a contradiction because of the positive homogeneity of Dini derivative. 
Thus $\ph$ is strictly monotone increasing over $(s_1,s_2)$.

\smallskip
2) We can suppose without loss of generality that $(s_1,s_2)$ is the maximally wide on the right open interval such that $\ph$ is strictly monotone increasing. Really the left end $s_1$ is not essential, but $s_2=\sup\{c\mid\ph \;\textrm{is strictly monotone increasing over}\;(s_1,c)\}$. If $s_2$ is the endpoint of $I$, then the function will be strictly monotone increasing over
$(int\, I)\cap [s_1,+\infty)$ and $s=s_1$. Consider the case when $s_2$ is an interior point of $I$ and $\ph(a)<\ph(s_2)$. Repeating the reasoning above and taking $s_2$ instead of $b$, we can increase the interval $(s_1,s_2)$, keeping the property of strict monotonicity, which contradicts the assumption. Therefore, this case is impossible. It remains to consider the case when $s_2$ is an interior point for $I$ and  $\ph(s_2)\le\ph(a)$. Then it follows from $\ph(a)<\ph(b)$ that $\ph(s_2)<\ph(b)$. By pseudoconvexity, we can make the conclusion that 
$\ld \ph(b,s_2-b)<0$.
On the other hand, taking into account that $\ph$ is strictly monotone over $(s_1,s_2)$, we obtain that 
\[
\ph(b+t(s_2-b))-\ph(b)>0
\]
 for all sufficiently small positive numbers $t$. Then
%\[\ld m \ph(b,s_2-b)=\liminf_{t\to+0}\,(m!/t^m)\,\left(\ph(b+t(s_2-b))-\ph(b)\right)\ge 0,\]
\[
\ld \ph(b,s_2-b)=\liminf_{t\to+0}\,(1/t)\,\left(\ph(b+t(s_2-b))-\ph(b)\right)\ge 0,
\]
which contradicts our assumption. Therefore, this case is also impossible.

\smallskip
3) Thus, we proved that $\ph$ is strictly monotone increasing over $({\rm int}\, I)\cap [s_1,+\infty)$. We prove that  if $s_2\in I$, then $\ph(t)<\ph(s_2)$ for all $t$ with $s_1<t<s_2$. Assume the contrary that there exists 
$\alpha\in(s_1,s_2)$ such that $\ph(s_2)\le\ph(\alpha)$. Since the interval $(s_1,s_2)$ is open and $\ph$ is strictly monotone increasing over it, then there exists such a number $\beta>\alpha$ that $\ph(\alpha)<\ph(\beta)$, $\beta\in(\alpha,s_2)$. Therefore $\ph(s_2)<\ph(\beta)$. According to pseudoconvexity $\ld \ph(\beta,s_2-\beta)<0$. On the other hand,
\[
\ld \ph(\beta,s_2-\beta)=\liminf_{t\to+0}\,(1/t)\,\left(\ph(\beta+t(s_2-\beta))-\ph(\beta)\right)\ge 0,
\]
which is a contradiction.

\smallskip
4) At last, if $b$ is an endpoint of $I$, then repeating the reasoning above, we can see that the lemma is also satisfied in this case.
\end{proof}

\begin{lemma}\label{lema2}
Let $\ph:I\to\R$ be a lower semicontinuous pseudoconvex function, which is defined on some interval $I\subset\R$.
Denote by $\hat I$ the set of points, where the global minimum of $\ph$ over $I$ is attained. Then $\hat I=\emptyset$, in which case $\ph$ is strictly monotone over $I$, or $\hat I$ is an interval, eventually containing only one point, in which case $\ph$ is constant over $\hat I$, strictly monotone increasing over  $I_+=\{t\in I\mid t\ge\hat t \mbox{ for all } \hat t\in\hat I\}$ and strictly monotone decreasing over
 $I_-=\{t\in I\mid t\le\hat t \mbox{ for all } \hat t\in\hat I\}$.
\end{lemma}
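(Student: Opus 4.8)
My plan is to follow the dichotomy stated in the lemma and treat the cases $\hat I=\emptyset$ and $\hat I\ne\emptyset$ separately. The two tools are Lemma~\ref{lema1} and its mirror image: applying Lemma~\ref{lema1} to the function $t\mapsto\ph(-t)$ on $-I$, which is again lower semicontinuous and pseudoconvex, yields the reflected statement. Together they give, for $a<b$ in $I$: (i) if $\ph(a)<\ph(b)$, then $\ph$ is strictly increasing on $I\cap[b,+\infty)$; and (ii) if $\ph(a)>\ph(b)$, then $\ph$ is strictly decreasing on $I\cap(-\infty,a]$. I also use throughout that, $\ph$ being lower semicontinuous, its sublevel sets are closed in $I$.

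\emph{The case $\hat I\ne\emptyset$.} Put $\mu=\min_I\ph$, so $\hat I=\{t\in I:\ph(t)=\mu\}$ is closed in $I$. I would first show that $\hat I$ is convex: if $t_1<t<t_2$ with $t_1,t_2\in\hat I$ but $\ph(t)>\mu$, then $\ph(t_1)=\mu<\ph(t)$, and (i) forces $\ph(t_2)>\ph(t)>\mu$, contradicting $t_2\in\hat I$. Thus $\hat I$ is an interval; writing $\alpha=\inf\hat I\le\beta=\sup\hat I$, closedness gives $\hat I=[\alpha,\beta]\cap I$, and $\ph\equiv\mu$ on it. For the strict increase on $I_+=I\cap[\beta,+\infty)$, take $\beta\le t<t'$ in $I$: if $t=\beta$, then $\ph(\beta)=\mu<\ph(t')$ since $t'\notin\hat I$; if $\beta<t$, then $\ph(t)>\mu$, and fixing any $\hat t\in\hat I$ (so $\hat t<t$ and $\ph(\hat t)=\mu<\ph(t)$), (i) gives $\ph$ strictly increasing on $I\cap[t,+\infty)\ni t'$. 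Either way $\ph(t)<\ph(t')$. The strict decrease on $I_-=I\cap(-\infty,\alpha]$ follows symmetrically from (ii). (When $\beta\notin I$ the set $I_+$ is empty, and likewise for $I_-$; the case $\alpha=\beta$ is allowed.)

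\emph{The case $\hat I=\emptyset$.} Here $\mu=\inf_I\ph\in[-\infty,+\infty)$ is not attained, so for every $t\in I$ there is $t'\in I$ with $\ph(t')<\ph(t)$. Put $D=\{s\in I:\ph\mbox{ strictly decreasing on }I\cap(-\infty,s]\}$ and $U=\{s\in I:\ph\mbox{ strictly increasing on }I\cap[s,+\infty)\}$; $D$ is downward closed and $U$ upward closed in $I$, and it suffices to prove $D=I$ or $U=I$, which is exactly strict monotonicity of $\ph$ on $I$. The crux is the identity $D\cup U=I$. If $s\in I\setminus D$, then $\ph$ is not strictly decreasing on $I\cap(-\infty,s]$, so there are $p<q\le s$ in $I$ with $\ph(p)\le\ph(q)$; if $\ph(p)<\ph(q)$, then (i) puts $q$, hence $s$, in $U$; if $\ph(p)=\ph(q)$, pick $e\in I$ with $\ph(e)<\ph(q)$, and note that $e<q$ again puts $q\in U$ by (i), whereas $e>q$ gives, by (ii), that $\ph$ is strictly decreasing on $I\cap(-\infty,q]$, forcing $\ph(p)>\ph(q)$, a contradiction. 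Hence $s\in U$, so $D\cup U=I$. Now suppose, for a contradiction, that $D$ and $U$ are both nonempty. Then $\sup D\ge\inf U$: otherwise any point of $I$ strictly between $\sup D$ and $\inf U$ (such points exist, since $D,U\ne\emptyset$ and $I$ is an interval) would lie in neither $D$ nor $U$. If $\sup D>\inf U$, then any $\xi$ strictly between $\inf U$ and $\sup D$ belongs to $D\cap U$; if $\sup D=\inf U$, then the common value $\xi$ lies in $I$, and $\ph$ is strictly decreasing on $I\cap(-\infty,\xi)$ and strictly increasing on $I\cap(\xi,+\infty)$. In either case, using lower semicontinuity at $\xi$ to compare $\ph(\xi)$ with the one-sided monotone limits of $\ph$ when necessary, one obtains $\ph(t)>\ph(\xi)$ for all $t\in I\setminus\{\xi\}$; thus $\xi$ is a global minimizer, contradicting $\hat I=\emptyset$. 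Therefore one of $D,U$ is empty, hence (as $D\cup U=I$) the other equals $I$, i.e.\ $\ph$ is strictly monotone on $I$.

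The step I expect to be the main obstacle is the identity $D\cup U=I$ in the case $\hat I=\emptyset$, together with the analysis of the merge point $\xi$: the equality sub-case $\ph(p)=\ph(q)$ and the case $\sup D=\inf U$ are precisely where one must use that the infimum is not attained and that $\ph$ is lower semicontinuous---the same difficulty already met in Lemma~\ref{lema1}. Once both cases are in place, the degenerate configurations ($\alpha=\beta$, empty $I_\pm$, constant $\ph$) are dispatched by the same arguments and need only a remark.
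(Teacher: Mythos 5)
Your proof is correct. The case $\hat I\ne\emptyset$ is essentially the paper's own argument: convexity of $\hat I$ and strict monotonicity on $I_+$, $I_-$ both follow from Lemma~\ref{lema1} and its reflected form (the paper re-runs the argument symmetrically rather than substituting $t\mapsto\ph(-t)$, which is the same thing); in fact your direct two-point verification on $I_+$ is cleaner than the paper's, which at that point invokes an unexplained appeal to upper semicontinuity. Where you genuinely diverge is the case $\hat I=\emptyset$: the paper chooses a minimizing sequence $\{t_n\}$ with strictly decreasing values, fixes an orientation ($t_0>t_1$) without loss of generality, proves the sequence is strictly decreasing, deduces strict increase of $\ph$ on each tail $I\cap[t_n,+\infty)$ and hence on their union $J$, and finally shows $J=I$ because a leftover point would be a global minimizer; you instead work with the sets $D$ and $U$ of points from which $\ph$ is strictly decreasing to the left, respectively strictly increasing to the right, prove the covering $D\cup U=I$ (this is where non-attainment of the infimum enters, in your sub-case $\ph(p)=\ph(q)$), and exclude $D\ne\emptyset\ne U$ because the merge point $\xi$ between $\sup D$ and $\inf U$ would be a global minimizer --- with lower semicontinuity at $\xi$, compared against the one-sided monotone limits, playing the role that Weierstrass' theorem on $[t_1,t_0]$ plays in the paper. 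I checked the compressed step: for $t>\xi$ in $I$ one has $\ph(\xi)\le\liminf_{s\to\xi^+}\ph(s)\le\ph(s')<\ph(t)$ for any $s'\in(\xi,t)$, and symmetrically on the left, so $\xi$ is indeed a strict global minimizer; also the degenerate situation where $D$ consists only of a left endpoint of $I$ causes no trouble, since then $\sup D=\inf U$ and the same contradiction applies. Your route buys a sequence-free, order-theoretic argument with no orientation case split; the paper's buys a more constructive picture (monotonicity along an explicit minimizing sequence) at the cost of the extra bookkeeping about $\{t_n\}$ and the union of tails. Both ultimately rest only on Lemma~\ref{lema1}, its mirror image, and the non-attainment of the infimum, so your proof is a legitimate alternative.
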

\begin{proof}
1) {\em The set $\hat I$ is empty or it is an interval, which eventually consists of an unique point.}
We can prove this showing that the following condition is satisfied for arbitrary $t_-<t_+$ with $t_-,\,t_+\in\hat I$: $[t_-,\,t_+]\subset \hat I$. If this not the case, then there is a point $t_0$, $t_-< t_0< t_+$ such that $\ph(t_-)<\ph(t_0)$. According to Lemma \ref{lema1}, $\ph$ is strictly monotone increasing over $I\cap [t_0,\,+\infty)$. Therefore,  $\ph(t_-)<\ph(t_0)<\ph(t_+)$. On the other hand, since both $t_-$ and $t_+$ are points, where the global minimum of  $\ph$ is reached, then the following holds: $\ph(t_-)=\ph(t_+)$. Thus, we obtained a contradiction.

\smallskip
2) {\em If $\hat I\neq\emptyset$, then $\ph$ is strictly monotone increasing over $I_+$ and strictly monotone decreasing over $I_-$}. We prove the first claim. The cases when $I_+=\emptyset$ or $I_+$ consists of an unique point are trivial. Let us suppose now that $I_+$ is not an empty set, nor consists of an unique point. Let $t_-\in\hat I$ and $t_+\in I_+$ are such that $t_+>\inf I_+$. Then $t_+\notin \hat I$. Otherwise the whole interval $[t_-,\,t_+]$ will be included in $\hat I$ and the inequality $t_+>\inf I_+$ will be not satisfied.
Hence $\ph(t_-)<\ph(t_+)$. We conclude from here, according to Lemma \ref{lema1} that $\ph$ is strictly monotone increasing over the interval
 $I\cap [t_+,\,+\infty)$. Since the strict monotonicity is satisfied for every $t_+>\inf I_+$  and $\ph$ upper semicontinuous, then it is easy to see that $\ph$ is strictly increasing over $I_+$.  Using similar arguments we can prove that  $a$, $b\in I$, $a<b$, $\ph(a)>\ph(b)$ implies that the function is strictly monotone decreasing over $I\cap (-\infty,a]$.  It follows from here that $\ph$ is strictly monotone decreasing over $I_-$.

\smallskip
3) {\em If $\hat I=\emptyset$, then $\ph$ is strictly monotone.}
Let us choose a sequence $\{t_n\}_{n=0}^\infty$, which consists of points from $I$ with the property $\ph(t_0)>\ph(t_1)> \dots > \ph(t_n) > \dots$ and  $\lim_n \ph(t_n)=\inf \{\ph(t)\mid t\in I\}$.
Suppose that $t_0>t_1$. We prove then that $\{t_n\}$ is strictly monotone decreasing and the function $\ph$ is strictly increasing over $I$.
To prove that $\{t_n\}$ is strictly decreasing, it is enough to show that $t_2< t_1$ (analogously $t_2< t_1$ implies that $t_3< t_2$ and so on).
Indeed, we have $t_2\notin [t_0, +\infty)$, because for $t\in [t_0, +\infty)$ is satisfied the inequality $\ph(t_0)\le\ph(t)$.
The inequality $\ph(t_1)<\ph(t_0)$ implies that $\ph$ is strictly increasing over $I\cap [t_0, +\infty)$. We also have $t_2\notin [t_1,\,t_0]$. Otherwise, if $t_1<t_2<t_0$ it follows from $\ph(t_1)>\ph(t_2)<\ph(t_0)$ that $\ph$ is strictly decreasing over $I\cap (-\infty,\,t_1]$ and strictly increasing over $I\cap [t_0, +\infty)$. Therefore, 
\[
\hat I=\{\xi\in [t_1,\,t_0] \mid \ph(\xi)=\min_{t_1\le t\le t_0} \ph(t) \}\ne\emptyset,
\]
since every lower semicontinuous function attains its minimal value over any closed interval by Weierstrass theorem. This fact contradicts the assumption $\hat I=\emptyset$.

The inequality $\ph(t_{n+1})<\ph(t_n)$ implies that $\ph$ is strictly increasing over the interval $I\cap [t_n, +\infty)$; therefore it is strictly increasing over the set $J=\bigcup_{n=1}^\infty \left( I\cap [t_n, +\infty) \right)$.
It remains to prove that $J=I$. If this not the case, then there exists a point $t_-\in I\setminus J$. Therefore $t_-< t_n$ for $n=0,\,1,\,\dots\,$.
The following inequality is satisfied for every $n$: $\ph(t_-)\le\ph(t_n)$. Otherwise, there is an integer $n$ such that $\ph(t_-)>\ph(t_{n+1})<\ph(t_n)$, which leads us to a contradiction to the assumption $\hat I=\emptyset$.
We have also that $\ph(t_-)\le\ph(t_n)$ for every $n$ implies that $\ph(t_-)\le\inf\{\ph(t)\mid t\in I\}$,
Therefore $t_-\in\hat I$, which contradicts to the condition  $\hat I=\emptyset$.
\end{proof}

\begin{theorem}\label{th1}
Let $\ph:I\to\R$ be a lower semicontinuous function, which is defined on some interval $I\subset\R$.
Denote by $\hat I$ the set of points, where the global minimum of $\ph$ over $I$ is attained. Then $\ph$ is pseudoconvex with respect to the lower Dini derivative if and only if the following conditions are satisfied:

\begin{enumerate}
\item[(i)]
$\hat I=\emptyset$, in which case $\ph$ is strictly monotone over $I$, or $\hat I$ is an interval, eventually containing only one point, in which case $\ph$ is constant over $\hat I$, strictly monotone increasing over  $I_+=\{t\in I\mid t\ge\hat t \mbox{ for all } \hat t\in\hat I\}$ and strictly monotone decreasing over  $I_-=\{t\in I\mid t\le\hat t \mbox{ for all } \hat t\in\hat I\}$;
\item[(ii)]
The function has no stationary points in the set $I\setminus\hat I$.
\end{enumerate}
\end{theorem}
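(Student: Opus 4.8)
The plan is to prove the two directions of the equivalence separately. The forward implication ``pseudoconvex $\Rightarrow$ (i), (ii)'' I expect to follow almost immediately from Lemma~\ref{lema2}; the converse is where the real work lies. Throughout I would use the standard notion of stationarity: a point $x\in I$ is stationary when $\ld\ph(x,u)\ge 0$ for every feasible direction $u$ (i.e.\ every $u$ with $x+tu\in I$ for all small $t>0$), which for $I\subseteq\R$ means only the directions $u=+1$ and $u=-1$ need be checked (just one of them when $x$ is an endpoint of $I$).

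For necessity I would first note that condition (i) is exactly the conclusion of Lemma~\ref{lema2}, so it needs no further argument. For (ii), let $x\in I\setminus\hat I$. Then $x$ is not a global minimizer of $\ph$ over $I$: if $\hat I\ne\emptyset$ this holds because $x\notin\hat I$, and if $\hat I=\emptyset$ it holds because the infimum of $\ph$ over $I$ is not attained and so lies strictly below $\ph(x)$. In either case there is $y\in I$ with $\ph(y)<\ph(x)$, so pseudoconvexity gives $\ld\ph(x,y-x)<0$; writing $y-x$ as a positive multiple of a feasible direction $u\in\{+1,-1\}$ and using the positive homogeneity of the Dini derivative, this becomes $\ld\ph(x,u)<0$, whence $x$ is not stationary.

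For sufficiency I would assume (i) and (ii), take $x,y\in I$ with $\ph(y)<\ph(x)$, and aim to show $\ld\ph(x,y-x)<0$; note $x\notin\hat I$. The key step is a structural observation drawn from (i): the strict sublevel set $S=\{t\in I:\ph(t)<\ph(x)\}$ lies entirely on one side of $x$. Since $\hat I$ is an interval (Lemma~\ref{lema2}) we have $I=I_-\cup\hat I\cup I_+$, so $x\in I_-$ or $x\in I_+$; if $x\in I_-$ then every point of $I$ to the left of $x$ is again in $I_-$ and hence, by strict monotonicity of $\ph$ there, has $\ph$-value $>\ph(x)$, so $S\subseteq(x,+\infty)$, and the case $x\in I_+$ gives $S\subseteq(-\infty,x)$ symmetrically; and if $\hat I=\emptyset$ then $\ph$ is strictly monotone on all of $I$ and again $S$ is confined to one side of $x$. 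Treating the case $S\subseteq(x,+\infty)$ (the other is symmetric): then $y>x$, so $+1$ is feasible at $x$, and by positive homogeneity $\ld\ph(x,y-x)=(y-x)\,\ld\ph(x,1)$, so it suffices to prove $\ld\ph(x,1)<0$. Here (ii) enters: $x$ is not stationary, so some feasible $u$ has $\ld\ph(x,u)<0$; but $u=-1$ is excluded, because every point of $I$ strictly to the left of $x$ has $\ph$-value $>\ph(x)$, so all difference quotients $t^{-1}\bigl(\ph(x-t)-\ph(x)\bigr)$ are positive for small $t>0$ and $\ld\ph(x,-1)\ge 0$. Hence $u=+1$, i.e.\ $\ld\ph(x,1)<0$, as required. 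If $x$ is an endpoint of $I$, the only feasible direction points towards $S$, and non-stationarity gives the claim at once.

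I expect the routine parts to be the bookkeeping of which of $I_-$, $\hat I$, $I_+$ (or which monotone branch) contains $x$ and the attendant sign conventions. The one genuinely load-bearing step is the last one above: strict monotonicity of $\ph$ on the side of $x$ away from $S$ by itself yields only $\ld\ph(x,1)\le 0$, which is too weak for pseudoconvexity; it is precisely (ii) that supplies a genuine descent direction, necessarily $+1$, and thereby sharpens the inequality to $\ld\ph(x,1)<0$. Recognising that (ii) is exactly what rules out the borderline value $0$ is the crux of the proof.
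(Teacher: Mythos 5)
Your proof is correct and takes essentially the same route as the paper's: necessity of (i) directly from Lemma~\ref{lema2} and of (ii) by producing a point of smaller value and applying pseudoconvexity, and sufficiency by noting $x\notin\hat I$, locating the strict sublevel set on one side of $x$ via the monotonicity structure, and using (ii) to upgrade $\ld\ph(x,y-x)\le 0$ to a strict inequality. The only difference is presentational: you make explicit the step the paper states tersely (``since $t_1$ is not stationary, the inequality must be strict''), namely that the descent direction supplied by (ii) cannot point away from the sublevel set because strict monotonicity forces a nonnegative Dini derivative there.
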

\begin{proof}
Let the function be pseudoconvex. Condition (i) is satisfied according to Lemmas \ref{lema1} and \ref{lema2}. 
We prove (ii). Assume the contrary that there is $t_1\in I\setminus\hat I$, which is a stationary point. If $\hat I\ne\emptyset$, then it follows from $t_1\notin\hat I$ that there exists $t_2\in I$ with $\ph(t_2)<\ph(t_1)$. If $\hat I=\emptyset$, then $\ph$ is monotone on $I$. In the case when $\ph$ is monotone increasing, then the interval $I$ is not closed on the left, because $\hat I=\emptyset$. Therefore, again there exists $t_2\in I$ with $\ph(t_2)<\ph(t_1)$. The case when $\ph$ is monotone decreasing is similar.  By pseudoconvexity, we conclude that $\ld \ph(t_1,t_2-t_1)<0$, which contradicts the stationarity of $t_1$.

Let conditions (i) and (ii) be satisfied. We prove that $\ph$ is pseudoconvex. Choose point $t_1\in I$ and $t_2\in I$ such that 
$\ph(t_2)<\ph(t_1)$. It follows from here that $t_1\notin\hat I$ or $\hat I=\emptyset$. If $t_1\notin\hat I$, then two cases are possible: $t_1\in I_+$ or $t_1\in I_-$. If $t_1\in I_+$, then $t_1+t(t_2-t_1)\in I_+$ for all sufficiently small $t>0$. Therefore
$\ph[t_1+t(t_2-t_1)]<\ph(t_1)$, and  $\ld \ph(t_1,t_2-t_1)\le 0$. Since $t_1$ is not stationary, then the last inequality must be strict, that is $\ld \ph(t_1,t_2-t_1)<0$. 
The cases $t_1\in I_-$ or $\hat I=\emptyset$, $\ph$ is monotone increasing or $\hat I=\emptyset$, $\ph$ is monotone decreasing
are similar. 
%Let $\hat I=\emptyset$ and $\ph$ is monotone increasing. According to the monotonicity, we infer that there is $t_2\in I$ with $\ph(t_2)<\ph(t_1)$ and $t_2<t_1$. By monotonicity and convexity of $I$, we obtain that $\ld \ph(t_1,t_2-t_1)\le 0$. Since $t_1$ is not stationary, then the last inequality must be strict. The case when $\hat I=\emptyset$ and $\ph$ is monotone deccreasing is similar. 
It follows from all cases that $\ph$ is pseudoconvex.
\end{proof}

The proof of the following theorem is similar to the proof of Theorem \ref{th1}:

\begin{theorem}\label{th5}
Let $\ph:I\to\R$ be a lower semicontinuous function, which is defined on some interval $I\subset\R$.
%Denote by $\hat I$ the set of points, where the global minimum of $\ph$ over $I$ is attained. 
Then $\ph$ is strictly pseudoconvex with respect to the lower Dini derivative if and only if the following conditions are satisfied:
\begin{enumerate}
\item[(i)]
$\hat I=\emptyset$, in which case $\ph$ is strictly monotone over $I$, or $\hat I$ contains only one point $\hat t$,  strictly monotone increasing over  $I_+=\{t\in I\mid t\ge\hat t \}$ and strictly monotone decreasing over  $I_-=\{t\in I\mid t\le\hat t \}$;
\item[(ii)]
The function has no stationary points $t\ne \hat t$.
\end{enumerate}
\end{theorem}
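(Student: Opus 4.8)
The plan is to deduce Theorem~\ref{th5} as a refinement of Theorem~\ref{th1}, since strict pseudoconvexity implies pseudoconvexity: if $\ph(y)<\ph(x)$ then $y\ne x$ and $\ph(y)\le\ph(x)$, so the strict implication gives $\ld\ph(x,y-x)<0$. Hence, in the forward direction, assuming $\ph$ strictly pseudoconvex, Theorem~\ref{th1} already provides that $\hat I$ is empty or an interval, that (when $\hat I\ne\emptyset$) $\ph$ is constant on $\hat I$, strictly monotone decreasing on $I_-$ and strictly monotone increasing on $I_+$, and that $\ph$ has no stationary point in $I\setminus\hat I$. The only extra thing to prove is that $\hat I$ cannot contain two distinct points: if $t_-,t_+\in\hat I$ with $t_-<t_+$, then $[t_-,t_+]\subseteq\hat I$ and $\ph$ is constant on $[t_-,t_+]$, so $\ph(t_-+t(t_+-t_-))-\ph(t_-)=0$ for all small $t>0$ and therefore $\ld\ph(t_-,t_+-t_-)=0$; but $t_+\ne t_-$ and $\ph(t_+)\le\ph(t_-)$, so strict pseudoconvexity forces $\ld\ph(t_-,t_+-t_-)<0$, a contradiction. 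Thus $\hat I=\emptyset$ or $\hat I=\{\hat t\}$, which together with the above is exactly (i); and then $I\setminus\hat I$ is $\{t\in I\mid t\ne\hat t\}$ (or $I$), which gives (ii).

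For the converse, suppose (i) and (ii) hold. A single point is a degenerate interval, so conditions (i)--(ii) of Theorem~\ref{th1} are satisfied and $\ph$ is pseudoconvex; in particular $\ld\ph(x,y-x)<0$ whenever $x,y\in I$ and $\ph(y)<\ph(x)$. It remains to verify the implication when $x,y\in I$, $x\ne y$, $\ph(y)=\ph(x)$. If $\hat I=\emptyset$ then $\ph$ is strictly monotone on $I$, so $\ph(y)=\ph(x)$ is impossible for $x\ne y$; hence $\hat I=\{\hat t\}$. Now $x$ and $y$ cannot both equal $\hat t$, and if $x=\hat t$ then $\ph(y)\le\ph(\hat t)=\min_I\ph$ forces $y\in\hat I$, i.e. $y=\hat t$, which is excluded; so $x\ne\hat t$, say $x<\hat t$ (the case $x>\hat t$ being symmetric). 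Since $\ph$ is strictly decreasing on $I_-=\{t\le\hat t\}$, we cannot have $y<\hat t$ (otherwise $x,y\in I_-$ and strict monotonicity would give $\ph(y)\ne\ph(x)$), and $y=\hat t$ is excluded, so $y>\hat t$ and $y-x>0$. Moreover, strict decrease of $\ph$ on $I_-$ yields $\ph(x-t)-\ph(x)>0$ for all small $t>0$ (whenever such points exist in $I$), so every leftward admissible direction at $x$ has nonnegative lower Dini derivative; since $x\ne\hat t$, condition (ii) says $x$ is not stationary, hence there is $\bar y\in I$ with $\ld\ph(x,\bar y-x)<0$, and by the previous remark necessarily $\bar y>x$, so by positive homogeneity $\ld\ph(x,+1)<0$ and therefore $\ld\ph(x,y-x)=(y-x)\,\ld\ph(x,+1)<0$. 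If instead $x$ is the left endpoint of $I$, then $+1$ is the only admissible direction and non-stationarity gives $\ld\ph(x,+1)<0$ directly. This establishes strict pseudoconvexity.

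The routine ingredients — positive homogeneity of the lower Dini derivative, the symmetry between the decreasing and increasing sides of $\hat t$, and the handling of the endpoints of $I$ — are treated exactly as in the proof of Theorem~\ref{th1} and rest on Lemmas~\ref{lema1} and \ref{lema2}. The genuinely new step, and the one to be careful about, is the equal-value case $\ph(x)=\ph(y)$ with $x\ne y$ in the converse: one must rule out $x=\hat t$, locate $y$ strictly on the opposite side of $\hat t$, and then combine the strict monotonicity on the side containing $x$ with the non-stationarity of $x$ to upgrade the one-sided estimate $\ld\ph(x,y-x)\le 0$ to the strict inequality $\ld\ph(x,y-x)<0$.
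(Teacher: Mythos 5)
Your proof is correct. Note that the paper itself gives no argument for Theorem \ref{th5} beyond the remark that the proof is ``similar to the proof of Theorem \ref{th1}'', i.e.\ it implicitly asks the reader to rerun Lemmas \ref{lema1}--\ref{lema2} and the Theorem \ref{th1} argument with strict inequalities. You instead treat Theorem \ref{th1} as a black box: since strict pseudoconvexity implies pseudoconvexity, the forward direction reduces to showing $\hat I$ cannot contain two points (your contradiction via $\ld\ph(t_-,t_+-t_-)\ge 0$ at a global minimizer versus the strict inequality forced by strict pseudoconvexity is sound --- indeed one does not even need constancy of $\ph$ on $[t_-,t_+]$, only that $t_-$ is a global minimizer), and the converse reduces to the new equal-value case $\ph(x)=\ph(y)$, $x\ne y$, which you settle by the same device the paper uses in the converse of Theorem \ref{th1}: strict monotonicity on the side of $\hat t$ containing $x$ makes the Dini derivative in the direction away from $\hat t$ nonnegative, so non-stationarity of $x$ (condition (ii)) together with positive homogeneity forces $\ld\ph(x,y-x)<0$; the endpoint case and the exclusion of $x=\hat t$ and $y=\hat t$ are handled correctly. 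This reduction buys economy --- nothing from the lemmas has to be re-proved in ``strict'' form --- and it isolates exactly where strictness enters (uniqueness of the minimizer and the equal-value implication), whereas the paper's intended route would repeat the whole monotonicity analysis.
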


\section{Applications of the characterization}

In this section, we apply Theorems \ref{th1} and \ref{th5} to prove some connections between several types of generalized convex functions in the nondifferentiable case. The proofs in the differentiable case are easy.

\begin{definition}
Let $X\subseteq\E$ be a convex set. Then a function $f:X\to\R$ is called:
\begin{enumerate}
\item [(i)]  quasiconvex over $X$, iff 
 \[ 
		f(x+t(y-x))\le\max(f(x),f(y))\quad \forall x\in X\quad \forall y\in X\quad
\forall t\in[0,1];
\] 
\item [(ii)]  semistrictly quasiconvex over $X$, iff 
\[
   x,\; y\in X,\; f(y)<f(x)\quad\textrm{imply that}\quad
f(x+t(y-x))<f(x)\quad \forall t\in(0,1).
\]
\end{enumerate}
\end{definition}

The following result is well-known and its  proof can be seen in the book of Martos \cite[Theorem 3.21, p. 51]{mar75}:

\begin{lemma}\label{lema4}
A scalar function $f$ defined on a convex set $X$ is semistrictly quasiconvex if and only if any segment $[x,y]\subset X$ can be split into three segments (each dividing point belonging to at least one of the segments it closes) such that $f$ is decreasing in the first, constant in the second, and strictly increasing in the third. Any one or two of these segments may be empty or degenerate into one point.
\end{lemma}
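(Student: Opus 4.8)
The plan is to reduce the statement to one variable and then describe the graph explicitly, closely paralleling Lemmas~\ref{lema1} and~\ref{lema2}. Semistrict quasiconvexity is a property of restrictions to segments --- the defining implication only ever involves two points and the open segment between them --- so $f$ is semistrictly quasiconvex on $X$ if and only if, for every $x,y\in X$, the function $\psi(t)=f(x+t(y-x))$ is semistrictly quasiconvex on $[0,1]$, and the three-segment splitting of $[x,y]$ is by definition the corresponding splitting of $[0,1]$ for $\psi$. So I would prove the one-variable version: a function $\psi:J\to\R$ on an interval $J\subseteq\R$ is semistrictly quasiconvex if and only if $J=J_1\cup J_2\cup J_3$ with $J_1$ to the left of $J_2$ to the left of $J_3$, adjacent pieces sharing at most an endpoint and together exhausting $J$, on which $\psi$ is respectively decreasing, constant, and strictly increasing, any of which possibly empty or a single point.

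For the direction ``splitting $\Rightarrow$ semistrict quasiconvexity'' I would argue directly. Let $m$ be the common value of $\psi$ on $J_2$. Given $t_1,t_2\in J$ with $\psi(t_2)<\psi(t_1)$, the value $\psi(t_1)$ exceeds $m$, so $t_1$ lies in $J_1$ or $J_3$; since $t_2$ cannot lie beyond $t_1$ in the direction of monotonicity (otherwise strict monotonicity would force $\psi(t_2)\ge\psi(t_1)$), one gets $t_1\in J_1$ if $t_2>t_1$ and $t_1\in J_3$ if $t_2<t_1$. For any $s$ strictly between $t_1$ and $t_2$, checking the cases $s\in J_1$, $s\in J_2$, $s\in J_3$ and using strict monotonicity on $J_1$, $J_3$ and $\psi\equiv m<\psi(t_1)$ on $J_2$ yields $\psi(s)<\psi(t_1)$, as required.

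The substantial direction is ``semistrict quasiconvexity $\Rightarrow$ splitting''. Put $\mu=\inf_J\psi$ and $\hat J=\{t\in J:\psi(t)=\mu\}$, possibly empty. The key preliminary step is to establish that $\psi$ is quasiconvex; granting this, $\hat J=\{t\in J:\psi(t)\le\mu\}$ is convex as a sublevel set of a quasiconvex function, $\psi\equiv\mu$ on it, and I would take $J_2=\hat J$. Next set $J_-=\{t\in J:t\le\hat t\ \text{for all}\ \hat t\in\hat J\}$ and $J_+$ symmetrically. If $\hat J\ne\emptyset$, fix any $\hat t\in\hat J$; for $t<t'$ in $J_+$ one has $\psi(t')>\mu$ (as $t'>\hat t$ forces $t'\notin\hat J$), and either $\psi(t)=\mu<\psi(t')$ outright, or $\hat t<t<t'$ and the defining implication applied to the pair $(t',\hat t)$ gives $\psi(t)<\psi(t')$; hence $\psi$ is strictly increasing on $J_+$ and, symmetrically, strictly decreasing on $J_-$. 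If instead $\hat J=\emptyset$, I would pick $t_0,t_1,\dots\in J$ with $\psi(t_0)>\psi(t_1)>\cdots\to\mu$, show this sequence is eventually monotone, and deduce that $\psi$ is strictly monotone on $J$ --- exactly as in part~3 of the proof of Lemma~\ref{lema2}. Putting $J_1=J_-$ and $J_3=J_+$, with each junction point assigned to one adjoining piece as allowed, gives the splitting.

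The main obstacle is this preliminary step: for a merely semistrictly quasiconvex function, quasiconvexity is not automatic (a single upward jump at an interior point is semistrictly quasiconvex but not quasiconvex), and one secures it only with the help of further regularity --- for instance lower semicontinuity, in the spirit of Section~\ref{s2} --- which is exactly why the nondifferentiable situation is so much harder than the differentiable one recalled at the start of this section. Once quasiconvexity is in hand, the monotonicity conclusions on $J_-$ and $J_+$ and the case $\hat J=\emptyset$ go through by the same reasoning as in Lemmas~\ref{lema1} and~\ref{lema2}, with the Dini-derivative estimates there replaced by direct applications of the semistrict-quasiconvexity implication.
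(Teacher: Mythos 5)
Your proposal stops exactly where the real content of Lemma \ref{lema4} lies, and you say so yourself: in the hard direction you need the set $\hat J$ of global minimizers of the one-variable restriction to be an interval (equivalently, quasiconvexity of the restriction), and you concede that this requires extra regularity such as lower semicontinuity, which is \emph{not} among the hypotheses of the lemma. So the proposal does not prove the statement as written, only a version with an added assumption. Moreover, the obstacle you flag cannot be removed by a cleverer argument along the same lines: with the paper's Definition of semistrict quasiconvexity and no continuity, the function $\varphi$ on $[-1,1]$ with $\varphi(0)=1$ and $\varphi(t)=0$ for $t\ne 0$ is semistrictly quasiconvex, its set of minimizers $[-1,0)\cup(0,1]$ is not an interval, and no splitting into a decreasing, a constant and a strictly increasing piece exists; so the ``only if'' direction, read literally with that definition, fails. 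The classical result cited here, \cite[Theorem 3.21]{mar75}, concerns Martos' explicitly quasiconvex functions (semistrictly quasiconvex \emph{and} quasiconvex), and the paper itself gives no proof, only the citation; in its one application (Theorem \ref{th3}) only the ``if'' direction is used and radial lower semicontinuity is available there anyway. To salvage your argument you must import quasiconvexity (or lower semicontinuity) into the hypotheses; then $\hat J$ is convex as a sublevel set, your monotonicity arguments on $J_\pm$ (which indeed use only the semistrict implication) go through, and the case $\hat J=\emptyset$ — which you currently defer to part 3 of Lemma \ref{lema2}, whose proof rests on Lemma \ref{lema1} and hence on pseudoconvexity and lower semicontinuity — still has to be rewritten with the semistrict implication in place of the Dini estimates, a step you sketch but do not carry out.

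There are also two smaller slips in your ``easy'' direction. You set $m$ equal to the value on $J_2$ and assert $\psi(t_1)>m$, which tacitly assumes the constant piece sits at the minimum level of the segment; and you invoke \emph{strict} monotonicity on $J_1$, although the statement only says ``decreasing'' there. Under the literal wording nothing relates the values on the three pieces, and the first piece is only nonstrictly decreasing, so the ``if'' direction as stated also needs care: $\psi\equiv 1$ on $[0,1]$ and $\psi\equiv 0$ on $(1,2]$ splits into decreasing, constant and empty pieces but is not semistrictly quasiconvex. Both points are fixable by reading the decomposition the way your converse constructs it (strictly decreasing down to the minimizing interval, constant there at the minimum value, strictly increasing after), but that reading should be made explicit rather than used silently.
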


The following theorem was proved in \cite{die81}. Our proof is new.

\begin{theorem}\label{th3}
Let $f:\,X\to\R$ be radially lower semicontinuous pseudoconvex function defined on the convex set $X$. Then $f$ is semistrictly quasiconvex, and moreover $f$ is quasiconvex.
\end{theorem}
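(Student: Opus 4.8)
The plan is to reduce everything to the one–dimensional characterization already established. Fix arbitrary $x,y\in X$ and introduce $\ph\colon[0,1]\to\R$ by $\ph(t)=f(x+t(y-x))$. Since the defining implication of pseudoconvexity is inherited by restrictions to subintervals, Proposition~1 shows that $\ph$ is pseudoconvex with respect to the lower Dini derivative, and since $f$ is radially lower semicontinuous, $\ph$ is lower semicontinuous on $[0,1]$. Because $[0,1]$ is compact and $\ph$ is lower semicontinuous, Weierstrass' theorem guarantees that the set $\hat I$ of global minimizers of $\ph$ over $[0,1]$ is nonempty; moreover $\hat I=\{t:\ph(t)\le\min\ph\}$ is closed, hence a closed subinterval $[\alpha,\beta]\subseteq[0,1]$. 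So on the segment $[x,y]$ we are automatically in the non-degenerate alternative of Theorem~\ref{th1}(i).

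Next I would invoke Theorem~\ref{th1}(i): $\ph$ is constant on $[\alpha,\beta]$, strictly monotone decreasing on $I_-=[0,\alpha]$, and strictly monotone increasing on $I_+=[\beta,1]$, and these three intervals cover $[0,1]$, any one or two of them possibly reducing to a single point. A strictly decreasing function is in particular decreasing in the sense fixed in the paper, so this is exactly the three–segment splitting of $[x,y]$ required in Lemma~\ref{lema4} (with the dividing points $\alpha$, $\beta$ lying in every adjacent segment). Applying Lemma~\ref{lema4} yields that $f$ is semistrictly quasiconvex.

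For the ``moreover'' clause I would argue quasiconvexity directly from the same decomposition rather than quoting the folklore implication ``radially lower semicontinuous $+$ semistrictly quasiconvex $\Rightarrow$ quasiconvex''. On $[0,1]$ a function that is decreasing, then constant, then strictly increasing has every sublevel set $\{t\in[0,1]:\ph(t)\le\lambda\}$ equal to an interval (a left tail within $[0,\alpha]$, all of $[\alpha,\beta]$ when $\lambda\ge\min\ph$, a right tail within $[\beta,1]$, glued at $\alpha$ and $\beta$); equivalently, $\ph(t)\le\max(\ph(0),\ph(1))$ for every $t\in[0,1]$. Hence $f(x+t(y-x))\le\max(f(x),f(y))$ for all $t\in[0,1]$, and since $x,y\in X$ were arbitrary, $f$ is quasiconvex.

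I do not expect a serious obstacle, since all the substantive work is contained in Theorem~\ref{th1}. The two points that need a line of care are: (a) checking that the restriction $\ph$ inherits the hypotheses of Theorem~\ref{th1} — pseudoconvexity clearly passes to a subinterval because the lower Dini derivative only sees values arbitrarily close to the base point, and radial lower semicontinuity of $f$ gives lower semicontinuity of $\ph$; and (b) bookkeeping of the degenerate cases $\alpha=0$, $\beta=1$, or $\alpha=\beta$, so that the empty or one–point pieces allowed in Lemma~\ref{lema4} are correctly matched. Note that condition (ii) of Theorem~\ref{th1} (absence of stationary points off $\hat I$) is not needed for this argument.
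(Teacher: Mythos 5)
Your proof is correct, and for the main claim it is exactly the paper's route: restrict to a segment, invoke Theorem \ref{th1}(i) (via Proposition 1 and radial lower semicontinuity), note that by Weierstrass the minimizer set of the restriction over the compact segment is a nonempty interval, and match the resulting strictly decreasing / constant / strictly increasing splitting against Martos' characterization in Lemma \ref{lema4} to get semistrict quasiconvexity. Where you diverge is the ``moreover'' clause: the paper simply cites the well-known fact that a (radially) lower semicontinuous semistrictly quasiconvex function is quasiconvex, whereas you extract quasiconvexity directly from the same three-piece decomposition, observing that it forces $\ph(t)\le\max(\ph(0),\ph(1))$ on $[0,1]$, i.e.\ $f(x+t(y-x))\le\max(f(x),f(y))$. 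This buys a self-contained proof that does not rely on the external lemma (and, as you note, does not use condition (ii) of Theorem \ref{th1} at all), at the cost of a little extra case bookkeeping; the paper's version is shorter but leans on a quoted result. Your two points of care -- inheritance of pseudoconvexity and lower semicontinuity by the restriction to $[0,1]$, and the degenerate cases $\alpha=0$, $\beta=1$, $\alpha=\beta$ -- are handled correctly, so I see no gap.
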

\begin{proof}
It obviously follows from Lemma \ref{lema4} and Theorem \ref{th1} that every pseudoconvex function is semistrictly quasiconvex. On the other hand it is well-known that a lower semicontinuous semistrictly quasiconvex function is quasiconvex.
\end{proof}

We introduce several notations from mathematical logic. Denote the negative of the statement $A$ by $\neg A$, the conjunction of the statements $A$ and $B$ by $A \land B$, the disjunction of $A$ and $B$ by $A \lor B$. 

Consider the following statements:
\[
\begin{array}{l}
A:=\{x \textrm{ is a stationary point of the function } f\}; \\
%\]
%\[
B:=\{ t=0 \textrm{ is a global minimizer of the function } \ph \textrm{ over } X(x,y)\}; \\
%\]
%\[
C:=\{ t=0 \textrm{ is a stationary point of the function } \ph \textrm{ over } X(x,y)\}.
\end{array}
\]

\begin{lemma}\label{lema3}
Let $f:\,X\to\R$ be radially upper semicontinuous quasiconvex function defined on the convex set $X\subseteq\R^n$. For arbitrary chosen $x\in X$ and $y\in X$ consider the restriction $\ph: X(x,y)\to\R$ of $f$ such that $\ph(t)=f[z(t)]$, where 
\[
z(t)=(1-t)x+ty\quad\textrm{and}\quad X(x,y)=\{t\in\R\mid z(t)\in X\}.
\]
Then $\neg A \land \neg B \Rightarrow \neg C$.

%Suppose that the point $t=0$ is not a globul minimizer of $\ph$ over $X(x,y)$ and $x$ is not a stationary point for $f$. Then $t=0$ is not stationary for $\ph$.
\end{lemma}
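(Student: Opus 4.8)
The plan is to establish the equivalent implication $\neg A\wedge\neg B\Rightarrow\ld\ph(0,1)<0$, which yields $\neg C$ because $t=0$ is interior to the interval $X(x,y)$; here and below I also use that $z^{*}:=z(t^{*})$ lies in the interior of $X$, both facts being automatic when $X$ is open (the setting in which the paper defines the lower Dini derivative). From $\neg B$ pick $t^{*}\in X(x,y)$ with $\ph(t^{*})<\ph(0)=f(x)$; as $\ph$ is quasiconvex (being the restriction of the quasiconvex $f$ to a line), all such witnesses lie on one side of $0$, so I may take $t^{*}>0$ (the case $t^{*}<0$ is symmetric, giving $\ld\ph(0,-1)<0$). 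Writing $z(t)=\tfrac{t^{*}-t}{t^{*}}\,x+\tfrac{t}{t^{*}}\,z^{*}$, quasiconvexity of $f$ gives $\ph(t)\le\ph(0)$ on $[0,t^{*}]$, and the one--sidedness gives $\ph(t)\ge\ph(0)$ for all $t\le 0$ in $X(x,y)$, hence $\ld\ph(0,-1)\ge 0$. Now $\neg A$ supplies a feasible direction $w$ at $x$ with $\ld f(x,w)<0$; if $w$ is a positive multiple of $y-x$ then $\ld\ph(0,1)=\ld f(x,y-x)$ is a positive multiple of $\ld f(x,w)$, so $\ld\ph(0,1)<0$ and we are done, while if $w$ is a negative multiple of $y-x$ then $\ld\ph(0,-1)<0$, contradicting the previous sentence. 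So $w\notin\R(y-x)$, and from $\ld f(x,w)<0$ there are $c>0$ and $s_{m}\downarrow 0$ with $f(x+s_{m}w)<f(x)-c\,s_{m}$.

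The crux is an elementary identity used together with radial upper semicontinuity. Put $\delta:=f(x)-f(z^{*})>0$. Since $r\mapsto f(z^{*}+rw)$ is upper semicontinuous and equals $f(z^{*})<f(x)-\delta/2$ at $r=0$, there is $\rho>0$ with $f(z^{*}+rw)<f(x)-\delta/2$ whenever $\norm{rw}<\rho$. For $0<t<t^{*}$ one has
\[
z(t)=\Bigl(1-\tfrac{t}{t^{*}}\Bigr)(x+s_{m}w)+\tfrac{t}{t^{*}}\,b_{t},\qquad b_{t}:=z^{*}-\tfrac{(t^{*}-t)s_{m}}{t}\,w,
\]
as one checks by cancelling the $w$--components. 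If $t$ is large enough that $\norm{b_{t}-z^{*}}=\tfrac{(t^{*}-t)s_{m}}{t}\norm{w}<\rho$, then both points $x+s_{m}w$ and $b_{t}$ lie in $X$ with $f$--value below $f(x)-\min\{c\,s_{m},\delta/2\}$; since $z(t)$ is their convex combination, quasiconvexity of $f$ gives $\ph(t)<\ph(0)-\min\{c\,s_{m},\delta/2\}$.

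Finally I choose $t$ of the same order as $s_{m}$: let $t_{m}:=\dfrac{2t^{*}\norm{w}\,s_{m}}{\rho+\norm{w}\,s_{m}}$. Then $t_{m}\downarrow 0$, $t_{m}$ lies in the admissible range above, and $s_{m}/t_{m}\to\rho/(2t^{*}\norm{w})>0$. For all large $m$ we have $c\,s_{m}<\delta/2$, so $\ph(t_{m})-\ph(0)<-c\,s_{m}$ and hence
\[
\frac{\ph(t_{m})-\ph(0)}{t_{m}}<-c\,\frac{s_{m}}{t_{m}}\ \longrightarrow\ -\frac{c\,\rho}{2t^{*}\norm{w}}<0 .
\]
Therefore $\ld\ph(0,1)=\liminf_{t\to+0}\bigl(\ph(t)-\ph(0)\bigr)/t\le-c\rho/(2t^{*}\norm{w})<0$, so $t=0$ is not a stationary point of $\ph$; this is exactly $\neg C$.

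The step I expect to be the main obstacle is the joint exploitation of radial upper semicontinuity and quasiconvexity in the middle paragraph: the strict sublevel set of $f$ must be enlarged around the witness $z^{*}$ \emph{only in the direction $w$}, so that $b_{t}$ recombines with the descent point $x+s_{m}w$ exactly onto the segment $z([0,t^{*}])$, and then $t$ must be scaled proportionally to $s_{m}$ in order to convert the bare strict inequality $\ph(t)<\ph(0)$ into a genuinely linear rate of decrease of $\ph$ at $0^{+}$. Some care is also needed for the degenerate configurations ($w$ parallel to $y-x$, or the witnesses of $\neg B$ lying to the left of $0$) and for the repeatedly used fact that $z^{*}$, as well as $t=0$ in $X(x,y)$, may be taken interior.
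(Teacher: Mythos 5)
Your proof is correct and is essentially the paper's own argument: you exploit the same triangle configuration (a point near $x$ on the descent ray, a perturbation of the low-value point $z^{*}$ obtained from radial upper semicontinuity, and the point $z(t)$ of the segment caught in between) together with quasiconvexity, merely replacing the paper's liminf computation with the fixed perturbed point $u=\bar y-\tau d$ and the ratio $\alpha(s)/s\to\tau$ by explicit sequences $s_m$, $t_m$ coupled so that $s_m/t_m$ stays bounded away from $0$, and treating separately the collinear case $w\in\R(y-x)$ that the paper glosses over. The interiority/openness caveat you flag (needed so that $b_t$ lies in $X$) is equally implicit in the paper's use of $u=\bar y-\tau d$ and is therefore not a gap relative to the paper's own proof.
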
 
\begin{proof}
Let both $\neg A$ and $\neg B$ are satisfied. Therefore, $x$ is not a stationary point for $f$ over $X$ and the point $t=0$ is not a global minimizer of $\ph$ over $X(x,y)$. Hence, there exists a direction $d$ such that $\ld f(x,d)<0$, and there exists $\bar t\in X(x,y)$ with $\ph(\bar t)<\ph(0)$. Therefore, $f(\bar y)<f(x)$, where $\bar y=x+\bar t(y-x)$. It follows from upper semicontinuity of $f$ that there exists $\tau>0$ with $f(u)<f(x)$ where $u=\bar y-\tau d$. Denote by $z(s)=x+s(\bar y-x)$, $0\le s\le 1$,  $v(s)$ point of intersection between the straight line passing through $u$ and $z(s)$ and the ray $\{x+sd\mid s\in (0,\infty)\}$. Let $v(s)=x+\alpha(s)d$. Then $\alpha(s)=s\tau/(1-s)$. According to the quasiconvexity of $f$ we have
\[
f[z(s)]\le\max\{f(u),f[v(s)]\}.
\]
\begin{center}
\begin{picture}(100,100)
\thicklines
\put(10,50){\line(1,0){100}}
\put(10,50){\line(1,1){40}}
\put(110,50){\line(-1,-1){40}}
\put(70,10){\line(-1,4){20}}
\put(0,39){$\bar y$}
\put(110,39){$x$}
\put(41,39){$z(s)$}
\put(68,2){$v(s)$}
\put(48,95){$u$}
\put(80,54){$s$}
\put(28,54){$1-s$}
\put(23,72){$\tau$}
\end{picture}\\
\text{Fig. 1}
\end{center}

Therefore
\[
\frac{f[z(s)]-f(x)}{s}\le\max\left\{\frac{f(u)-f(x)}{s},\frac{f[v(s)]-f(x) }{s}\right\}.
\]
Taking into account that $\lim_{s\to 0^+}[f(u)-f(x)]/s=-\infty$, then we obtain that
\[
\ld f(x,\bar y-x)\le\liminf_{s\to 0^+}\frac{f[x+\alpha(s)d]-f(x)}{\alpha(s)}\cdot\frac{\alpha(s)}{s}=\tau\ld f(x,d)<0.
\]
Therefore $\ld \ph(0,1)=\ld f(x,d)<0$ and $t=0$ is not a stationary point for $\ph$. We obtain from here that $\neg C$ is satisfied.
\end{proof}

\begin{lemma}\label{pr1}
Let $x$ and $y$ be arbitrary points from the set $X$ and all hypothesis of Lemma \ref{lema3} be satisfied. Then $A \lor B \iff C$.
%$t=0$ is a stationary point of $\ph$ if and only if $t=0$ is a global minimizer of $\ph$ over $X(x,y)$ or $x$ is a stationary point of $f$ over $X$.
\end{lemma}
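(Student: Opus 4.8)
The plan is to split the biconditional into its two implications, observing that one of them is already available.

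The implication $C\Rightarrow A\lor B$ requires nothing new: it is exactly the contrapositive of Lemma~\ref{lema3}. Indeed, by De Morgan's law $\neg(A\lor B)$ is the same as $\neg A\land\neg B$, and Lemma~\ref{lema3} states $\neg A\land\neg B\Rightarrow\neg C$; contraposing yields $C\Rightarrow A\lor B$. So I would simply cite Lemma~\ref{lema3} here.

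For the reverse implication $A\lor B\Rightarrow C$, I would argue by cases, using the radiality (positive homogeneity) of the lower Dini derivative. The starting observation is that $\ld\ph(0,1)=\ld f(x,y-x)$ and $\ld\ph(0,-1)=\ld f(x,x-y)$, so that $t=0$ is a stationary point of $\ph$ over $X(x,y)$ as soon as these two quantities are nonnegative (only the forward one being relevant when $0$ is an endpoint of $X(x,y)$). If $A$ holds, then $x$ is a stationary point of $f$, hence $\ld f(x,d)\ge 0$ for every admissible direction $d$; in particular $\ld f(x,y-x)\ge 0$ and, when meaningful, $\ld f(x,x-y)\ge 0$, so $C$ follows. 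If $B$ holds, then $\ph(t)\ge\ph(0)$ for every $t\in X(x,y)$, so the difference quotients $t^{-1}[\ph(t)-\ph(0)]$ and $t^{-1}[\ph(-t)-\ph(0)]$ are nonnegative for all sufficiently small $t>0$; passing to the $\liminf$ gives $\ld\ph(0,1)\ge 0$ and $\ld\ph(0,-1)\ge 0$, i.e.\ $C$.

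I do not expect a genuine obstacle here; the only point needing a little care is the bookkeeping of which one-sided directions at $t=0$ are admissible. Since $X$ is convex and $y\in X$, the direction $y-x$ is always admissible for $f$ at $x$, so $\ph$ is at least one-sidedly defined near $0$; if $x$ lies on the boundary of $X$ along the segment through $x$ and $y$, only the forward direction is in play and the argument above is unchanged. Beyond this, the proof is a direct unwinding of the definitions of stationary point, global minimizer, and the positive homogeneity of the Dini derivative.
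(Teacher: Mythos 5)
Your proposal is correct and takes essentially the same route as the paper: one direction is the contrapositive of Lemma~\ref{lema3}, and the other rests on the easy implications $A\Rightarrow C$ and $B\Rightarrow C$, which the paper merely asserts (via a chain of negations) and you spell out using positive homogeneity of the Dini derivative and the nonnegativity of difference quotients at a global minimizer.
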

\begin{proof}
It follows from Lemma \ref{lema3} that $\neg A \land \neg B \Rightarrow \neg C$. On the other hand $A \Rightarrow C$ and $B\Rightarrow C$. Therefore, $\neg A\Leftarrow\neg C$ and $\neg B\Leftarrow \neg C$. We obtain from here that $\neg A\land\neg B\Leftarrow\neg C$. Hence
$\neg A\land\neg B\iff\neg C$. Taking  the relation between the negative statements, we conclude from the relation $\neg(\neg D)\iff D$.
$\neg(\neg A\land\neg B)\iff\neg(\neg C)$. Therefore $\neg(\neg A)\lor\neg(\neg B)\iff C$, that is $A\lor B\iff C$.
\end{proof}

Let us denote by the pointed parenthesis $\langle$ and $\rangle$ that the corresponding endpoint may or may not belong to the segment in question. The following lemma is Theorem 3.17 in page 49 in the book \cite{mar75}:

\begin{lemma}\label{lema5}
The scalar function $f$, defined on a convex set $X$, is quasiconvex if and only if for any segment $[x_1,x_2]\subset X$ one of the following three statements hold:

\begin{enumerate}
\item [(i)]
 $f$ is increasing or decreasing along $[x_1,x_2]$; 
\item [(ii)]
 $f$ is increasing along $(x_1,x_2]$, but not along $[x_1,x_2]$, or decreasing along $[x_1,x_2)$, but not along $[x_1,x_2]$; 
\item [(iii)]
there is a point $\bar x$ such that $f(x)$ is decreasing along $[x_1,\bar x\rangle$ and increasing along $\langle x_1,x_2]$, where at least one of these subsegments is closed.
\end{enumerate}
\end{lemma}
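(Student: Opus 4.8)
The lemma is classical---it is \cite[Theorem~3.17]{mar75}---but here is the route I would follow. Since $f$ is quasiconvex on $X$ exactly when its restriction to every segment is quasiconvex, and since \emph{increasing}/\emph{decreasing along a segment} is precisely monotonicity of the associated one-variable function, I would fix a segment $[x_1,x_2]$, parametrize it by $z(t)=x_1+t(x_2-x_1)$, and reduce everything to the one-dimensional claim: a function $\ph:[a,b]\to\R$ has all sublevel sets $L_\lambda=\{t\in[a,b]\mid\ph(t)\le\lambda\}$ convex if and only if it has one of the shapes (i), (ii), (iii). The only working tool needed is that $L_\lambda$ is an interval, so a configuration $r<t<u$ with $\ph(t)>\max\{\ph(r),\ph(u)\}$ is forbidden.

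For the implication ``(i), (ii) or (iii) $\Rightarrow$ quasiconvex'' I would just compute $L_\lambda$ case by case. A monotone $\ph$ has $L_\lambda$ an initial or terminal subinterval; if $\ph$ is increasing only on $(a,b]$ then $L_\lambda\cap(a,b]$ is of the form $(a,s_\lambda\rangle$ and adjoining $a$ or not still leaves an interval (likewise for ``decreasing on $[a,b)$''); in case (iii) I would split $L_\lambda$ at $\bar x$ into an up-set on the left and a down-set on the right, and observe that the assumption that one of the two subsegments is closed at $\bar x$ guarantees $\bar x\in L_\lambda$ whenever both parts are nonempty, so no gap appears. Checking this no-gap condition at $\bar x$ is the only delicate point here.

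For the converse, set $m=\inf_{[a,b]}\ph$ and split into two cases. If $m$ is attained, $\hat I:=L_m$ is a nonempty interval; the forbidden-configuration rule forces $\ph$ to be decreasing to the left of $\hat I$ (if $s<t$ lie there with $\ph(s)<\ph(t)$, a minimizer $u\ge t$ puts $t$ strictly between $s$ and $u$ with $\ph(t)$ above both values), constant on $\hat I$, and increasing to the right of $\hat I$; then reading off whether $\hat I$ meets $a$ or $b$, and whether $\ph$ jumps at such a contact point, lands $\ph$ in case (i) (globally monotone), case (ii) (a jump into $\hat I$ at an endpoint), or case (iii) with $\bar x$ the left endpoint of $\hat I$. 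If $m$ is not attained, the nested nonempty intervals $L_\lambda$ ($\lambda>m$) have empty intersection, so their closures shrink to a single point $c$ with $\ph(c)>m$, and for $\lambda$ near $m$ each $L_\lambda$ lies on one fixed side of $c$, say inside $(c,b]$; then the forbidden-configuration rule (using that $\ph$ takes values arbitrarily close to $m$ just to the right of $c$) shows $\ph$ is decreasing on $[a,c]$ and increasing on $(c,b]$, which is case (iii) with $\bar x=c$ when $c$ is interior and case (ii) when $c=a$ (symmetrically if instead $L_\lambda\subseteq[a,c)$). I expect this last case---locating where the infimum is approached and verifying that the resulting picture really matches the precise endpoint conventions in (i)--(iii)---to be the main obstacle; everything else is routine once the sublevel sets are exploited systematically.
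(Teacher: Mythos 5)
The paper does not actually prove this lemma: it is quoted verbatim as Theorem~3.17 (p.~49) of Martos \cite{mar75}, so there is no internal proof to compare yours against. Judged on its own, your sketch is correct and self-contained in outline: reducing to the one-variable restriction and working with convexity of the sublevel sets $L_\lambda$ is a sound route, and the two points you flag as delicate do go through. For the sufficiency direction, the only nontrivial check is indeed the ``no gap at $\bar x$'' situation $r<\bar x<u$, where the assumption that one of the two subsegments is closed at $\bar x$ supplies $\ph(\bar x)\le\ph(r)$ or $\ph(\bar x)\le\ph(u)$. For the necessity direction your case split is right, and the unattained-infimum analysis is the real content: any nondegenerate interval contained in all closures $\overline{L_\lambda}$ would have its interior inside every $L_\lambda$, contradicting non-attainment, so the closures do shrink to one point $c$ with $\ph(c)>m$; nestedness fixes the side of $c$, and the forbidden configuration $r<t<u$ with $\ph(t)>\max\{\ph(r),\ph(u)\}$, combined with values arbitrarily close to $m$ just beyond $c$, yields monotonicity on $[a,c]$ and $(c,b]$, landing in (iii) (or (ii) when $c$ is an endpoint, since $\ph(c)>m$ rules out (i) there). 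One remark: in the attained case your ``reading off'' of subcases is more complicated than necessary --- with $\hat I=L_m\neq\emptyset$ convex, taking $\bar x=\inf\hat I$ (or any point of $\hat I$) always gives ``decreasing on $[a,\bar x]$, increasing on $(\bar x,b]$'' by the same forbidden-configuration argument, so case (iii) holds outright and no discussion of jumps at the endpoints of $\hat I$ is needed. With that simplification, what remains to turn the sketch into a proof is only routine bookkeeping with the paper's non-strict notions of increasing and decreasing.
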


The following result is Theorem 2 in Ref. \cite{kom83}; see also \cite[Theorem 1]{kom83}. We provide another proof.
 
\begin{theorem}\label{th4}
Let $X\subseteq\R^n$ be a convex set. Suppose that $f:X\to\R$ is a radially continuous function. Then $f$ is pseudoconvex if and only if $f$ is quasiconvex and every stationary point is a global minimizer.
\end{theorem}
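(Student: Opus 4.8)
The plan is to prove both directions by reducing to restrictions of $f$ over straight lines via Proposition~1 and then applying Theorem~\ref{th1}. For necessity, if $f$ is pseudoconvex then, since radial continuity entails radial lower semicontinuity, Theorem~\ref{th3} already gives that $f$ is quasiconvex; and if some stationary point $x$ were not a global minimizer there would be $y\in X$ with $f(y)<f(x)$, whence pseudoconvexity forces $\ld f(x,y-x)<0$, contradicting $\ld f(x,d)\ge0$ for every direction $d$. So every stationary point is a global minimizer, and this direction is immediate.

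For sufficiency, assume $f$ is quasiconvex and that every stationary point is a global minimizer. By Proposition~1 it is enough to show that, for arbitrary $x,y\in X$, the restriction $\ph(t)=f((1-t)x+ty)$ on the interval $I=X(x,y)$ is pseudoconvex with respect to the lower Dini derivative; here $\ph$ is continuous (radial continuity of $f$) and quasiconvex (restriction of a quasiconvex function). Writing $\hat I$ for the set of global minimizers of $\ph$ on $I$, Theorem~\ref{th1} reduces matters to verifying conditions~(i) and~(ii) for $\ph$. I would obtain (ii) as follows. Suppose $t_0\in I\setminus\hat I$ were a stationary point of $\ph$; put $x_0=(1-t_0)x+t_0y$ and re-parametrize the same line so that the parameter value $0$ corresponds to $x_0$, so that statement $C$ of Lemma~\ref{pr1} holds at $x_0$. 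Since $f$ is radially upper semicontinuous and quasiconvex, Lemma~\ref{pr1} yields $A\lor B$: either $x_0$ is a stationary point of $f$ --- in which case the hypothesis makes $x_0$ a global minimizer of $f$ over $X$, hence $\ph(t_0)=f(x_0)\le\ph(t)$ for all $t\in I$, i.e.\ $t_0\in\hat I$ --- or $x_0$ is a global minimizer of $\ph$ on $I$, i.e.\ again $t_0\in\hat I$; both possibilities contradict $t_0\notin\hat I$.

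To get (i), I would first invoke the elementary fact that a continuous quasiconvex function of one real variable is either monotone or ``decreasing then increasing'' about some point (this follows from Lemma~\ref{lema5} applied to closed subsegments, with continuity used to pass to the whole domain). Applied to $\ph$ on $I$, this already shows that $\hat I$ is empty or an interval and that, when $\hat I\ne\emptyset$, $\ph$ equals its minimum on $\hat I$ and is monotone on $I_-$ and on $I_+$. To upgrade monotonicity to strict monotonicity off $\hat I$ I would argue that, otherwise, $\ph$ is constant on some nondegenerate subinterval $[a,b]$ not contained in $\hat I$ and so takes a value strictly above $\min_I\ph$ there; but every interior point $t$ of $[a,b]$ satisfies $\ld\ph(t,1)=\ld\ph(t,-1)=0$ and hence is a stationary point of $\ph$, so $t\in\hat I$ by (ii) and $\ph(t)=\min_I\ph$ --- a contradiction. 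With (i) and (ii) established, Theorem~\ref{th1} gives that $\ph$, and therefore $f$, is pseudoconvex.

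The step I expect to be the main obstacle is the verification of (ii): converting a stationarity point of the one-dimensional restriction into the dichotomy ``$x_0$ is a stationary point of $f$, or $x_0$ minimizes $f$ along that line,'' which is exactly the content of Lemmas~\ref{lema3} and~\ref{pr1} (where the geometric construction in the picture of the proof of Lemma~\ref{lema3} and the full strength of quasiconvexity of $f$ are used). Once that is in place, the derivation of (i) is routine one-variable analysis, modulo a little care with endpoint cases (when $I$ is not closed, or $\hat I$ reaches an endpoint of $I$), which are treated exactly as in the proof of Theorem~\ref{th1}.
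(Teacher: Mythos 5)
Your proposal is correct and follows essentially the same route as the paper: reduce to restrictions over lines, use the quasiconvex structure (Lemma~\ref{lema5}) for the monotonicity part, invoke Lemma~\ref{lema3}/Lemma~\ref{pr1} together with the stationary-point hypothesis to exclude stationary points outside $\hat I$ and to upgrade monotonicity to strict monotonicity, and conclude via Theorem~\ref{th1}. Your write-up is in fact somewhat more explicit than the paper's (e.g.\ the reparametrization placing the suspected stationary point at $t=0$ and the separate treatment of the case $\hat I=\emptyset$), but the underlying argument is the same.
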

\begin{proof}
Let $f$ be pseudoconvex. Then it follows from Theorem \ref{th3} that $f$ is quasiconvex. It is  easy to see that every stationary point is a global minimizer.

Let $f$ be a quasiconvex and every stationary point be a global minimizer. Then the function $f$ satisfies Lemma \ref{lema5} We prove that $f$ is pseudoconvex. First, we prove that every point, which is not a global minimizer of the restriction, is not stationary for the restriction. Suppose that $t$ is a point which is not a global minimizer of the restriction $\ph$. Without loss of generality $t$ belongs to some interval where the $\ph$ is increasing. According to Lemma \ref{lema3} we conclude from the hypothesis that $t$ is not a stationary point of the restriction.

Second, we prove that the restriction is strictly monotone over the set $I\setminus\hat I$. Let $t_1$ and $t_2$ be two distinct points such that $\ph(t_1)=\ph(t_2)$. Then every $t\in(t_1,t_2)$ is a stationary point of $\ph$. It follows from Proposition \ref{pr1} that $x+t(y-x)$ is a stationary point of $f$ or $t$ is a global minimizer of $\ph$. By the hypothesis both cases imply that $t$ is a global minimizer of $\ph$. Therefore the whole interval $[t_1,t_2]$ belongs to the set of global minimizers of $\ph$. Therefore $\ph$ is strictly monotone outside the set of global minimizers. 

By Theorem \ref{th1}, taking into account both cases, the function $f$ is pseudoconvex.
\end{proof}

\begin{theorem}\label{th6}
Let $X\subseteq\R^n$ be a convex set and $f:X\to\R$  radially lower semicontinuous function. Then $f$ is   pseudoconvex on $X$ if and only if 
$f$ is semistrictly quasiconvex on $X$ and it obey the following property: for any $x,y\in X$ such that $f(y)<f(x)$ the number $t=0$ is not an stationary point of the function of one variable 
\[
\ph:X(x,y)\to\R,\quad \ph(t)=f(x+t(y-x)),
\]
where $X(x,y)=\{t\in\R\mid x+t(y-x)\in X\}$.
\end{theorem}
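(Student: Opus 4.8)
The plan is to prove the two implications separately; Theorem \ref{th3} handles most of the necessity, and a short case analysis at $t=0$ handles sufficiency, so that neither Theorem \ref{th1} nor Theorem \ref{th5} is needed.

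For necessity, suppose $f$ is pseudoconvex. By Theorem \ref{th3} it is semistrictly quasiconvex, so only the stationarity property remains to be checked. If $x,y\in X$ with $f(y)<f(x)$, then pseudoconvexity gives $\ld f(x,y-x)<0$; since the lower Dini derivative is radial, $\ld\ph(0,1)=\ld f(x,y-x)<0$, and hence $t=0$ is not a stationary point of $\ph$.

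For sufficiency, assume $f$ is radially lower semicontinuous and semistrictly quasiconvex and that the stationarity property holds. A lower semicontinuous semistrictly quasiconvex function is quasiconvex (the fact already used in the proof of Theorem \ref{th3}), so $f$ is quasiconvex, and in particular every restriction of $f$ to a line is quasiconvex and lower semicontinuous. Fix $x,y\in X$ with $f(y)<f(x)$ and set $\ph(t)=f(x+t(y-x))$ on $X(x,y)$; since $x,y\in X$, the interval $X(x,y)$ contains $[0,1]$, so $0$ is either an interior point of $X(x,y)$ or its left endpoint, and $\ph(1)<\ph(0)$. By hypothesis $t=0$ is not stationary for $\ph$, i.e. $\ld\ph(0,u)<0$ for some admissible direction $u$; I must upgrade this to $\ld\ph(0,1)<0$, which is exactly $\ld f(x,y-x)<0$. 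If $0$ is the left endpoint, the only admissible direction is the positive one and there is nothing more to prove. If $0$ is interior, I claim $\ld\ph(0,-1)\ge 0$: otherwise there is $\delta>0$ with $-\delta\in X(x,y)$ and $\ph(-\delta)<\ph(0)$, and since $0$ lies strictly between $-\delta$ and $1$, quasiconvexity of $\ph$ on $[-\delta,1]$ forces $\ph(0)\le\max\{\ph(-\delta),\ph(1)\}<\ph(0)$, a contradiction. By positive homogeneity of $\ld\ph$, non-stationarity of $t=0$ then gives $\ld\ph(0,1)<0$. Either way $\ld f(x,y-x)<0$, and since $x,y$ were arbitrary, $f$ is pseudoconvex.

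The only subtle point is this last case distinction: for an interior point $t=0$, non-stationarity could a priori be caused by the backward direction rather than by the direction $1$ pointing toward $y$, and excluding that possibility when $\ph(1)<\ph(0)$ is precisely where quasiconvexity (supplied by semistrict quasiconvexity together with lower semicontinuity) enters. The remaining facts — that restrictions of semistrictly quasiconvex functions are semistrictly quasiconvex, and the radiality and positive homogeneity of the lower Dini derivative — are routine.
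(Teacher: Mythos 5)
Your proof is correct, but the sufficiency direction takes a genuinely different route from the paper. The paper proves sufficiency by working with the restrictions $\ph$ and verifying the hypotheses of its one-dimensional characterization: it invokes Martos's structural result (Lemma \ref{lema4}) for semistrictly quasiconvex functions, shows that $\ph$ is \emph{strictly} monotone off the set of global minimizers and has no stationary points there, and then concludes pseudoconvexity of each restriction via Theorem \ref{th1} (together with Proposition 1). You bypass Theorem \ref{th1} and Lemma \ref{lema4} entirely: you upgrade semistrict quasiconvexity to quasiconvexity using the Karamardian-type fact (lower semicontinuous plus semistrictly quasiconvex implies quasiconvex), which the paper itself already uses in Theorem \ref{th3} and whose proof is one-dimensional, so radial lower semicontinuity suffices; then you argue locally at $t=0$: since $[0,1]\subseteq X(x,y)$, the point $0$ is either the left endpoint or interior, and in the interior case quasiconvexity of $\ph$ together with $\ph(1)<\ph(0)$ rules out $\ld\ph(0,-1)<0$, so non-stationarity and positive homogeneity force $\ld\ph(0,1)=\ld f(x,y-x)<0$. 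This is shorter and more elementary, it isolates exactly the point where the semicontinuity hypothesis is used, and it incidentally shows that in the sufficiency direction ``semistrictly quasiconvex'' could be weakened to ``quasiconvex.'' What it does not give, and what the paper's route does, is the structural information about the restrictions (strict monotonicity outside the set of minimizers) that ties this theorem to the characterization of Theorem \ref{th1}. Your reading of ``stationary point'' of $\ph$ as the absence of a feasible direction of descent agrees with the paper's usage, and the necessity direction is the same in both proofs.
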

\begin{proof}
Let $f$ be pseudoconvex. It follows from Theorem \ref{th3} that it is semistrictly quasiconvex. By definition of the pseudoconvexity it is easy to see that $f(y)<f(x)$ implies $t=0$ is not a stationary point of $\ph$.

Suppose that $f$ is semistrictly quasiconvex and $f(y)<f(x)$ implies $t=0$ is not a stationary point of $\ph$. It follows from here that every point, which is not a global minimizer of $\ph$, is also not a stationary point. Taking into account Lemma \ref{lema4}, we prove that $\ph$ is {\em strictly} monotone on the intervals, where the global minimum of $\ph$ is not attained. Assume the contrary that there exist points $x\in X$ and $y\in X$ and $t_1\in\R$, $t_2\in\R$, $t_2<t_1$ such that $f[x+t_1(y-x)]=f[x+t_2(y-x)]$ and the global minimum of $\ph$ is not attained at $t_1$ and $t_2$. We conclude from semistrict quasiconvexity that $\ph(t)=\ph(t_1)=\ph(t_2)$ for all $t\in\R$ such that $t_2<t<t_1$
Therefore there exists $t_3$ such that $x+t_3(y-x)\in X$ and $\ph(t_3)<\ph(t_1)$. If $t_3<t_2$, then $\ld\ph(t_1,t_3-t_1)=0$, which contradicts, by Lemma \ref{lema4},  the assumption that $t_1$  is not a stationary point. It $t_3>t_1$, then $\ld\ph(t_2,t_3-t_2)=0$, which contradicts, by Lemma \ref{lema4},  the assumption that  $t_2$ is not a stationary point.
\end{proof}

\begin{theorem}\label{th7}
Let $X\subseteq\R^n$ be a convex set and $f:X\to\R$ pseudoconvex on $X$
radially lower semicontinuous function. Then $f$ is strictly pseudoconvex on $X$ if and only if it is
radially nonconstant, that is there is no a line segment, where the function is constant.
\end{theorem}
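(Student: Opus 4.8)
The plan is to use Theorem \ref{th1} and Theorem \ref{th5} together with Proposition 1 to reduce everything to the one–dimensional situation. First I would prove the easy direction: if $f$ is strictly pseudoconvex, then for any $x,y\in X$ with $x\ne y$, applying the definition of strict pseudoconvexity to the pair of points on the segment $[x,y]$ shows that $f$ cannot be constant on $[x,y]$; indeed if $f(z_1)=f(z_2)$ for two distinct points $z_1,z_2\in[x,y]$, then $\ld f(z_1,z_2-z_1)<0$, so $f$ strictly decreases from $z_1$ in the direction of $z_2$, contradicting constancy. Hence $f$ is radially nonconstant.

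For the converse, suppose $f$ is pseudoconvex and radially nonconstant; I must show $f$ is strictly pseudoconvex. By Proposition 1 it suffices to check that the restriction $\ph:X(x,y)\to\R$, $\ph(t)=f(x+t(y-x))$, is strictly pseudoconvex for every $x,y\in X$. Since $f$ is pseudoconvex, each such $\ph$ is a lower semicontinuous pseudoconvex function on the interval $X(x,y)$, so by Theorem \ref{th1} its domain splits into the three pieces $I_-$, $\hat I$, $I_+$ with $\ph$ strictly decreasing on $I_-$, constant on $\hat I$, strictly increasing on $I_+$, and no stationary points outside $\hat I$. The radial nonconstancy of $f$ forces $\ph$ to be nonconstant on any nondegenerate subinterval, and in particular $\hat I$ cannot contain more than one point: if $\hat I$ had two distinct points $\hat t_1<\hat t_2$, then $\ph$ would be constant on $[\hat t_1,\hat t_2]$, i.e. $f$ would be constant on the corresponding nondegenerate segment of $X$, contradicting the hypothesis. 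Thus either $\hat I=\emptyset$ with $\ph$ strictly monotone on $X(x,y)$, or $\hat I=\{\hat t\}$ is a single point with $\ph$ strictly decreasing on $I_-=\{t\mid t\le\hat t\}$, strictly increasing on $I_+=\{t\mid t\ge\hat t\}$, and no stationary point $t\ne\hat t$. These are precisely conditions (i) and (ii) of Theorem \ref{th5}, so $\ph$ is strictly pseudoconvex.

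Having verified that every restriction $\ph$ is strictly pseudoconvex, Proposition 1 (applied in its strict form) yields that $f$ itself is strictly pseudoconvex with respect to the lower Dini derivative, completing the proof.

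The main obstacle, such as it is, will be the bookkeeping in the converse direction: one has to be careful that radial nonconstancy of $f$ really does rule out a multi-point $\hat I$ for \emph{every} choice of $x,y$ — this is immediate once one observes that $\hat I$ is an interval (Theorem \ref{th1}(i)) so a two-point $\hat I$ already contains a whole nondegenerate segment on which $\ph$, hence $f$, is constant. Beyond that, the argument is a routine transcription of Theorem \ref{th1} into Theorem \ref{th5} via the single observation that excluding constant segments collapses the middle piece to at most a point.
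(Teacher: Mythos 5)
Your proof is correct and takes essentially the same route as the paper, whose entire proof is the single sentence that the result ``follows directly from Theorems \ref{th1} and \ref{th5}''; you simply supply the details the paper leaves implicit (Proposition 1 to pass to restrictions, and the observation that radial nonconstancy collapses the minimizer interval $\hat I$ to at most one point, so conditions (i)--(ii) of Theorem \ref{th5} follow from those of Theorem \ref{th1}). One small wording point: a negative lower Dini derivative $\ld f(z_1,z_2-z_1)<0$ does not literally mean $f$ ``strictly decreases'' from $z_1$ toward $z_2$, but your contradiction is unaffected, since constancy of $f$ on the segment forces $\ld f(z_1,z_2-z_1)=0$.
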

\begin{proof}
The theorem follows directly from Theorems \ref{th1} and \ref{th5}.
\end{proof}

\end{document}